\newtheorem{theorem}{Theorem}[section]
\newtheorem{lemma}[theorem]{Lemma}
\newtheorem{corollary}[theorem]{Corollary}
\theoremstyle{definition}
\newtheorem{example}[theorem]{Example}
\theoremstyle{remark}
\newtheorem{remark}[theorem]{Remark}
\numberwithin{equation}{section}
\begin{document}
\setcounter{page}{1}

\title[K-theory of Jones polynomials]{K-theory of Jones polynomials}

\author[Glubokov]{Andrey ~Yu. ~Glubokov$^1$}

\author[Nikolaev]{Igor ~V. ~Nikolaev $^2$}

\date{\today}

\address{$^{1}$ Department of Mathematics,  Purdue University, 
150 N. University Street, West Lafayette, IN 47907-2067,  United States.}
\email{\textcolor[rgb]{0.00,0.00,0.84}{agluboko@purdue.edu}}

\address{$^{2}$ Department of Mathematics and Computer Science, St.~John's University, 8000 Utopia Parkway,  
New York,  NY 11439, United States.}
\email{\textcolor[rgb]{0.00,0.00,0.84}{igor.v.nikolaev@gmail.com}}

\dedicatory{In memory of V.~F.~R.~Jones}

\subjclass[2010]{Primary 46L85; Secondary 57M25.}

\keywords{Jones polynomial, cluster $C^*$-algebra.}

\date{}

\begin{abstract}
We recover  the Jones polynomials of knots and links from 
the K-theory  of a cluster $C^*$-algebra of the sphere with two cusps. 
In particular, an interplay between the Chebyshev and Jones polynomials is studied. 
\end{abstract}

\maketitle

\section{Introduction}
Cluster algebras  are a   class of commutative rings  introduced by  [Fomin \& Zelevinsky 2002]  \cite{FoZe1}.
The  cluster algebra  of rank $n$ 
is a subring  $\mathcal{A}(\mathbf{x}, B)$  of the field  of  rational functions in $n$ variables
depending  on  variables  $\mathbf{x}=(x_1,\dots, x_n)$
and a skew-symmetric matrix  $B=(b_{ij})\in M_n(\mathbf{Z})$.
The pair  $(\mathbf{x}, B)$ is called a  seed.
A new cluster $\mathbf{x}'=(x_1,\dots,x_k',\dots,  x_n)$ and a new
skew-symmetric matrix $B'=(b_{ij}')$ is obtained from 
$(\mathbf{x}, B)$ by the   exchange relations [Williams 2014]  \cite[Definition 2.22]{Wil1}:
\begin{eqnarray}\label{eq1.1}
x_kx_k'  &=& \prod_{i=1}^n  x_i^{\max(b_{ik}, 0)} + \prod_{i=1}^n  x_i^{\max(-b_{ik}, 0)},\cr 
b_{ij}' &=& 
\begin{cases}
-b_{ij}  & \mbox{if}   ~i=k  ~\mbox{or}  ~j=k\cr
b_{ij}+{|b_{ik}|b_{kj}+b_{ik}|b_{kj}|\over 2}  & \mbox{otherwise.}
\end{cases}
\end{eqnarray}
The seed $(\mathbf{x}', B')$ is said to be a  mutation of $(\mathbf{x}, B)$ in direction $k$.
where $1\le k\le n$.  The  algebra  $\mathcal{A}(\mathbf{x}, B)$ is  generated by the 
cluster  variables $\{x_i\}_{i=1}^{\infty}$
obtained from the initial seed $(\mathbf{x}, B)$ by the iteration of mutations  in all possible
directions $k$.  

 The  Laurent phenomenon
proved by  [Fomin \& Zelevinsky 2002]  \cite{FoZe1}  says  that  $\mathcal{A}(\mathbf{x}, B)\subset \mathbf{Z}[\mathbf{x}^{\pm 1}]$,
where  $\mathbf{Z}[\mathbf{x}^{\pm 1}]$ is the ring of  the Laurent polynomials in  variables $\mathbf{x}=(x_1,\dots,x_n)$.
In particular, each  generator $x_i$  of  the algebra $\mathcal{A}(\mathbf{x}, B)$  can be 
written as a  Laurent polynomial in $n$ variables with the   integer coefficients. 
 The cluster algebra  $\mathcal{A}(\mathbf{x}, B)$  has the structure of an additive abelian
semigroup consisting of the Laurent polynomials with positive coefficients. 
In other words,  the $\mathcal{A}(\mathbf{x}, B)$ is a dimension group  \cite[Definition 3.5.2]{Nik1}.
We define the cluster $C^*$-algebra  $\mathbb{A}(\mathbf{x}, B)$  as   an  AF-algebra,  
such that $K_0(\mathbb{A}(\mathbf{x}, B))\cong  \mathcal{A}(\mathbf{x}, B)$
\cite[Section 4.4]{Nik1}.

 Denote by $S_{g,n}$  the Riemann surface   of genus $g\ge 0$  with the $n\ge 0$ cusps.
 Let   $\mathcal{A}(\mathbf{x},  S_{g,n})$ be the cluster algebra 
 coming from  a triangulation of the surface $S_{g,n}$  [Fomin,  Shapiro  \& Thurston  2008]  \cite{FoShaThu1} and 
  let $\mathbb{A}(\mathbf{x}, S_{g,n})$ be the corresponding cluster $C^*$-algebra. 
In what follows, we focus on the special case $g=0$ and $n=2$, i.e. when the surface  $S_{0,2}$ is 
 a sphere with two cusps.  The  $S_{0,2}$  is homotopy equivalent to an annulus
$\{z=u+iv\in\mathbf{C} ~|~ r\le |z|\le R\}$.   The AF-algebra  $\mathbb{A}(\mathbf{x}, S_{0,2})$
has the Bratteli diagram shown in Figure 1 and the 
 surface $S_{0,2}$  has an ideal triangulation with 
one marked point on each boundary component  [Fomin,  Shapiro  \& Thurston  2008, Example 4.4]  \cite{FoShaThu1} 
given by the matrix:
\begin{equation}\label{eq1.3}
B=\left(
\begin{matrix} 0 & 2\cr -2 & 0
\end{matrix}\right).
\end{equation}

\begin{figure}
\begin{picture}(100,100)(0,150)

\put(50,200){\circle{3}}

\put(33,188){\circle{3}}
\put(67,188){\circle{3}}

\put(50,177){\circle{3}}
\put(16,177){\circle{3}}
\put(84,177){\circle{3}}

\put(-1,164){\circle{3}}
\put(34,164){\circle{3}}
\put(68,164){\circle{3}}
\put(103,164){\circle{3}}


\put(49,199){\vector(-3,-2){15}}
\put(51,199){\vector(3,-2){15}}

\put(32,187){\vector(-3,-2){15}}
\put(34,187){\vector(3,-2){15}}

\put(66,187){\vector(-3,-2){15}}
\put(68,187){\vector(3,-2){15}}


\put(14,175){\vector(-3,-2){15}}
\put(17,175){\vector(3,-2){15}}

\put(49,175){\vector(-3,-2){15}}
\put(51,175){\vector(3,-2){15}}

\put(83,175){\vector(-3,-2){15}}
\put(86,175){\vector(3,-2){15}}

\put(-10,155){$\dots$}
\put(27,155){$\dots$}
\put(64,155){$\dots$}
\put(101,155){$\dots$}

\end{picture}
\caption{Pascal's triangle  diagram of the AF-algebra $\mathbb{A}(\mathbf{x}, S_{0,2})$.} 
\end{figure}
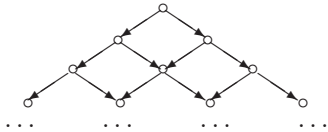

\bigskip
The aim of our note is
 an interplay between the Laurent phenomenon 
in  the cluster algebra $\mathcal{A}(\mathbf{x}, S_{0,2})\cong K_0(\mathbb{A}(\mathbf{x}, S_{0,2}))$
and the Jones  polynomials $V_L(t)$  [Jones 1985] \cite{Jon1}. 
The existence of such a link follows from a representation $\rho$ of the braid group $\mathscr{B}_k$
given by the formulas in \cite[Theorem 4.4.1]{Nik1}:
\begin{equation}\label{eq1.2}
\begin{cases} \mathscr{B}_{2g+1}\to \mathbb{A}(\mathbf{x}, S_{g,1}) &\cr
\mathscr{B}_{2g+2}\to \mathbb{A}(\mathbf{x}, S_{g,2}). &\end{cases}
\end{equation}

To formalize our results, denote by  $\mathscr{J}\subset\mathbf{Z} [t^{\pm {1\over 2}}]$
a ring generated by the set $\{ V_L(t) ~|~ L ~\hbox{runs all links}\}$. 
Denote by $x_1$ and $x_2$ the independent variables of 
 the rank 2 cluster algebra $\mathcal{A}(\mathbf{x}, S_{0,2})$. 
Let $\mathbb{P}\subset  \mathbb{A}(\mathbf{x}, S_{0,2})$  be 
an  AF-algebra defined by the truncated Pascal's  diagram shown in Figure 2
[Jones 1991] \cite[pp. 36, 50]{J1}. 
Our main results are as follows. 
\begin{theorem}\label{thm1.1}
There exists an inclusion of the rings  $\mathscr{J}\subset \mathcal{A}(\mathbf{x}, S_{0,2})$
and an isomorphism of the dimension groups $\mathscr{J}\cong K_0(\mathbb{P})$
induced by the  substitution: 
\begin{equation}
t^2= {2(x_1^2+x_2^2+1)\over x_1x_2}. 
\end{equation}
\end{theorem}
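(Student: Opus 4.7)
My plan combines three classical ingredients: (i)~the representation~(\ref{eq1.2}) of the braid group into the AF-algebra, extended via the Temperley--Lieb quotients of higher-rank braid groups sitting inside the Bratteli tower; (ii)~Alexander's and Markov's theorems presenting every link as a braid closure modulo stabilisation; and (iii)~the Markov trace on the Jones tower computing $V_L(t)$. In combination, these should let me push any braid $\beta$ into the AF-algebra, read off $V_L(t)$ as a cluster-algebra element via the Markov trace, and thereby realise the ring inclusion $\mathscr{J} \subset \mathcal{A}(\mathbf{x}, S_{0,2})$ while promoting it to the dimension-group isomorphism $\mathscr{J} \cong K_0(\mathbb{P})$.

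Concretely the argument would proceed in four steps. First, iterate the exchange relation~(\ref{eq1.1}) with the matrix $B$ of~(\ref{eq1.3}) to produce the sequence of cluster variables $x_n$; a short direct calculation shows that $\lambda := (x_1^2 + x_2^2 + 1)/(x_1 x_2)$ is a mutation invariant and that the $x_n$ satisfy the \emph{linear} Chebyshev recursion $x_{n+1} = \lambda x_n - x_{n-1}$, so they are essentially Chebyshev polynomials of the second kind in $\lambda$. Setting $\tau := 2\lambda$ so that $t^2 = \tau$ reproduces the substitution displayed in the theorem, and this is precisely the ``Chebyshev versus Jones'' interplay advertised in the abstract. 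Second, identify the truncated Pascal AF-algebra $\mathbb{P}$ with the inductive limit of the Temperley--Lieb algebras $TL_n(\tau)$ at loop value $\tau$: both share the same Bratteli diagram (Pascal's triangle cut along a zig-zag), and $K_0(TL_\infty)$ is generated over $\mathbf{Z}$ by the traces of its minimal central projections, which are themselves Chebyshev polynomials in $\tau$. Third, invoke the Jones trace formula
\begin{equation*}
V_{\widehat{\beta}}(t) = \left(-\frac{t+1}{\sqrt{t}}\right)^{k-1} (\sqrt{t})^{e(\beta)} \operatorname{tr}\bigl(\pi_{TL}(\beta)\bigr),
\end{equation*}
expressing every Jones polynomial as an integer combination of such projection traces. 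Fourth, assemble these pieces: the substitution $t^2 = \tau$ then identifies $\mathscr{J}$ with an explicit subring of $\mathcal{A}(\mathbf{x}, S_{0,2})$, which by step~2 coincides with $K_0(\mathbb{P})$.

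The main obstacle, as I see it, is not the arithmetic of the substitution but the verification that the Pascal-diagram truncation $\mathbb{P}$ singled out by Jones~\cite{J1} is exactly \emph{the} sub-AF-algebra of $\mathbb{A}(\mathbf{x}, S_{0,2})$ whose $K_0$ equals $\mathscr{J}$ on the nose --- neither a proper sub-dimension-group nor a rational extension of it. One must show that the induced map on dimension groups is both injective (positivity has to be preserved at every Bratteli level, which reduces to the uniqueness of the Markov trace on $TL_\infty$ at index $\tau$) and surjective (every element of $\mathscr{J}$ must be hit, which reduces to producing enough Jones polynomials --- for instance those of iterated torus links --- to generate the appropriate subring). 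Both ingredients are classical in the subfactor literature, but assembling them into a commutative diagram with the cluster exchange relations and verifying compatibility with the Laurent structure is where the real technical work resides.
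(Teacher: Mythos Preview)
Your plan is workable but takes a substantially heavier route than the paper does. The paper never invokes the braid representation~(\ref{eq1.2}), Alexander/Markov, or the Temperley--Lieb tower explicitly. Instead it matches two three-term recursions directly: after the change of variables
\[
W_{L^\pm}=\Bigl(-\tfrac{t+1}{\sqrt t}\Bigr)V_{L^\pm},\qquad W_L=(t^2-1)V_L,
\]
the skein relation becomes $W_L=t^2\,W_{L^+}-W_{L^-}$, while by the Sherman--Zelevinsky canonical basis (Theorem~\ref{thm2.1}) the elements $T_n(x_1x_4-x_2x_3)$ satisfy $T_{n+1}=2(x_1x_4-x_2x_3)T_n-T_{n-1}$. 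Identifying $t^2$ with $2(x_1x_4-x_2x_3)$ and then computing $x_1x_4-x_2x_3=(x_1^2+x_2^2+1)/(x_1x_2)$ from the rank-2 exchange relations finishes the ring inclusion. The $K_0$ isomorphism is then obtained by a short commutative-diagram argument comparing the Bratteli diagrams in Figures~1 and~2.

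Two remarks on the comparison. First, your Step~1 tracks the cluster variables $x_n$ themselves (second-kind Chebyshev behaviour in $\lambda$), whereas the paper works with the first-kind polynomials $T_n(\lambda)$ that appear as the ``extra'' basis elements in Sherman--Zelevinsky; the distinction matters because it is the $T_n$ recursion, with leading coefficient $2\lambda$, that matches the rewritten skein relation on the nose. Second, the obstacle you single out --- showing that $K_0(\mathbb P)$ is \emph{exactly} $\mathscr J$, not merely commensurable with it --- is real, and the paper's treatment of this point (Lemma~3.7) is itself quite terse: it appeals to the inclusion of Bratteli diagrams and functoriality of $K_0$ rather than to the uniqueness of the Markov trace or an explicit surjectivity argument. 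So your proposed verification via TL projection traces would in fact supply more detail here than the paper offers, at the cost of importing more subfactor machinery than the skein/Chebyshev matching actually requires.
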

\begin{remark}
A combinatorial approach to  the Laurent phenomenon 
and the Jones  polynomials 
which is  based on the continued fractions and the snake graphs was
studied recently  by [Lee \& Schiffler 2019] \cite{LeeSch1}.
\end{remark}

\bigskip
The paper is organized as follows. Section 2 contains notation and definitions
necessary for the proof of theorem \ref{thm1.1}. 
The proof of theorem \ref{thm1.1} is given in Section 3. 
In Section 4 we calculate the Jones polynomials of two unlinked unknots, 
the Hopf link and the trefoil knot using theorem \ref{thm1.1}. 
A discussion of related results can be found in Section 5.

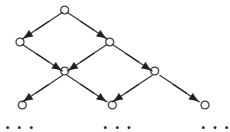
\begin{figure}
\begin{picture}(100,100)(0,150)

\put(50,200){\circle{3}}

\put(33,188){\circle{3}}
\put(67,188){\circle{3}}

\put(50,177){\circle{3}}

\put(84,177){\circle{3}}

\put(34,164){\circle{3}}
\put(68,164){\circle{3}}
\put(103,164){\circle{3}}


\put(49,199){\vector(-3,-2){15}}
\put(51,199){\vector(3,-2){15}}


\put(34,187){\vector(3,-2){15}}

\put(66,187){\vector(-3,-2){15}}
\put(68,187){\vector(3,-2){15}}


\put(49,175){\vector(-3,-2){15}}
\put(51,175){\vector(3,-2){15}}

\put(83,175){\vector(-3,-2){15}}
\put(86,175){\vector(3,-2){15}}

\put(27,155){$\dots$}
\put(64,155){$\dots$}
\put(101,155){$\dots$}

\end{picture}
\caption{Truncated Pascal's triangle diagram of the AF-algebra $\mathbb{P}$.} 
\end{figure}

\section{Preliminaries}
We  shall briefly review the Jones polynomials,  skein relation and cluster algebras of rank 2. 
We refer the reader to  [Jones 1985]  \cite{Jon1},  [Jones 1991] \cite{J1}
and [Sherman \& Zelevinsky 2004] \cite{SheZe1}
for a detailed account.

\subsection{Jones polynomials}
By $A_n$ we denote  an $n$-dimensional von Neumann algebra
generated by the identity and the Jones projections $e_1,e_2,\dots, e_n$,  see [Jones 1985]  \cite{Jon1}
for the definition of $e_i$.   Such  projections
are known to satisfy the  relations
$e_ie_{i\pm 1}e_i  = {1\over   [\mathcal{M} : \mathcal{N}]} ~e_i,
~e_ie_j = e_je_i$, if $|i-j|\ge 2$,
and the trace formula:
$tr~(e_n x) =     {1\over   [\mathcal{M} : \mathcal{N}]} ~tr~(x), \quad\forall x\in A_n$.
The reader can verify that  the relations for the Jones projections $e_i$ coincide with
such for the  generators $\sigma_i$ of the braid group  after  an  adjustment 
of the notation
$\sigma_i \mapsto  \sqrt{t} [(t+1)e_i-1], 
~\left[  \mathcal{M} : \mathcal{N} \right] = 2+t+{1\over t}$.
One gets a family $\rho_t$ of  representations of the braid group $\mathscr{B}_n$ into 
the Jones algebra $A_n$.   To get  a topological invariant
of the closed braid $\hat b$ of $b\in \mathscr{B}_n$ coming from the 
trace (a character) of the representation,  one needs to choose 
a representation whose trace is invariant under the first and the second 
Markov moves of  the braid $b$.  
The trace is invariant  of  the first Markov move,
because  two similar matrices have the same trace
for any representation from the family  $\rho_t$.    For the second Markov move,
we  have the trace formula which (after obvious substitutions) takes the
form $tr~(b\sigma_n) = -{1\over t+1} tr~(b)
~tr~(b\sigma_n^{-1}) = -{t\over t+1} tr~(b)$.
In general $tr~(b\sigma_n^{\pm 1})\ne tr~(b)$,  but one can always
re-scale the trace to get the equality.  Indeed,  the second Markov move
takes the braid from $\mathscr{B}_i$ and replaces it by a braid from $B_{i-1}$;  
there is  a finite number of such replacements because  the algorithm stops
for $\mathscr{B}_1$.  Therefore  a finite number of re-scalings by the constants    
 $-{1\over t+1}$ and     $-{t\over t+1}$ will give a quantity invariant under
 the second Markov move;  the quantity is known as the Jones polynomial
 of the closed braid $\hat b$.
Let $b\in \mathscr{B}_n$ be a braid and $\exp(b)$ be the sum of all  powers of generators $\sigma_i$
and $\sigma_i^{-1}$  in the word presentation of $b$ and  let $L:=\hat b$ be the closure of $b$.
Thus an isotopy  invariant of the link $L$ is given by  the quantity: 
\begin{equation}\label{eq2.5}
V_{L}(t):=\left(-{t+1\over\sqrt{t}}\right)^{n-1}(\sqrt{t})^{\exp(b)}~tr~(b). 
\end{equation}

\subsection{Skein relation}
If  the links  differ from each other only in a small region, 
 the trace invariant (\ref{eq2.5}) can be calculated recursively.
 Namely, it is known that $V_K(t)=1$, 
where  $K$ is the unknot. Recall that any link $L$ can be obtained from $K$ by a finite number
of local operations of adding an overpass or underpass to the diagram of the link $L$.
Denote by  $L^+$ ($L^-$, resp.)  a link obtained by adding an overpass
(underpass, resp.) to the link $L$.
\begin{theorem}
{\bf (\cite[Theorem 12]{Jon1})}
Each $V_L(t)$ can 
 be obtained  from the surgery of  $K$ using the skein relation:
\begin{equation}\label{eq2.6}
{1\over t}V_{L^-} -tV_{L^+}=\left(\sqrt{t}-{1\over\sqrt{t}}\right)V_L.
\end{equation} 
\end{theorem}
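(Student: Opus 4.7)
The plan is to establish (2.6) by deriving a Hecke-type quadratic identity for $\sigma_i$ inside the algebra $A_n$ and then transporting it across the Markov trace and the normalization (2.5).

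First, I would use the substitution $\sigma_i\mapsto \sqrt{t}\,[(t+1)e_i-1]$ together with the idempotence $e_i^2=e_i$ of a Jones projection. Direct expansion gives
\[
 \sigma_i^{2}=t\bigl[(t+1)^{2}e_i-2(t+1)e_i+1\bigr]=t(t^{2}-1)e_i+t,
\]
and eliminating $e_i$ through $(t+1)e_i=\sigma_i/\sqrt{t}+1$ yields the quadratic
\[
 \sigma_i^{2}-\sqrt{t}\,(t-1)\,\sigma_i-t^{2}=0.
\]
Multiplying by $\sigma_i^{-1}$ and dividing by $t$ rewrites this as the linear identity
\[
 \frac{1}{t}\,\sigma_i-t\,\sigma_i^{-1}=\Bigl(\sqrt{t}-\frac{1}{\sqrt{t}}\Bigr)\cdot\mathbf{1}
\]
inside $A_n$. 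This is the local algebraic shadow of the desired skein relation.

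Second, I would globalize this to a statement about links. By Alexander's theorem each of $L^{+}$, $L^{-}$ and $L$ can be presented as the closure of a braid word of the same length and strand count, namely $b^{+}=w_1\sigma_i w_2$, $b^{-}=w_1\sigma_i^{-1}w_2$ and $b=w_1 w_2$ in $\mathscr{B}_n$. The tracial property $\mathrm{tr}(w_1 c w_2)=\mathrm{tr}(c\,w_2 w_1)$ of the Markov trace converts the algebra identity into
\[
 \frac{1}{t}\,\mathrm{tr}(b^{+})-t\,\mathrm{tr}(b^{-})=\Bigl(\sqrt{t}-\frac{1}{\sqrt{t}}\Bigr)\mathrm{tr}(b).
\]
Inverting (2.5) to write each $\mathrm{tr}(b^{\bullet})$ as a scalar multiple of the corresponding $V_{\bullet}(t)$, and using that all three braids share the strand count $n$ while $\exp(b^{+})=\exp(b)+1=\exp(b^{-})+2$, the common factor $(-(t+1)/\sqrt{t})^{n-1}(\sqrt{t})^{\exp(b)}$ cancels from all three terms and only (2.6) remains.

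The main obstacle will be the bookkeeping of the fractional powers of $\sqrt{t}$ introduced by the writhe shifts, and pinning down the sign convention relating $\sigma_i$ to an overpass versus an underpass; any mismatch there would simply interchange $L^{+}$ and $L^{-}$ in the final formula. The only other prerequisite, that (2.5) actually defines an isotopy invariant (invariance under both Markov moves), has been established in the preceding paragraph of the text, and is used here whenever one passes between $V_L(t)$ and $\mathrm{tr}(b)$.
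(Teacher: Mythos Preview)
The paper does not prove this statement; it is quoted as a preliminary from \cite[Theorem~12]{Jon1} and used as a black box in Section~3, so there is no in-paper proof to compare against.

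Your approach is exactly Jones's original one: extract the Hecke quadratic $\sigma_i^{2}=\sqrt{t}\,(t-1)\sigma_i+t^{2}$ from $e_i^{2}=e_i$, rewrite it as a linear relation among $\sigma_i,\sigma_i^{-1},\mathbf{1}$, and push it through the Markov trace and the writhe normalization~(2.5). The algebra up to and including the local identity $\tfrac{1}{t}\sigma_i-t\sigma_i^{-1}=\sqrt{t}-\tfrac{1}{\sqrt{t}}$ is correct.

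One caution on the last step: the factor $(-(t+1)/\sqrt{t})^{n-1}(\sqrt{t})^{\exp(b)}$ does not cancel quite as cleanly as you claim, because the three braids carry different exponent sums. Tracking the residual $(\sqrt{t})^{\pm1}$ gives
\[
t^{-3/2}\,V_{L^{+}}-t^{3/2}\,V_{L^{-}}=\bigl(\sqrt{t}-\tfrac{1}{\sqrt{t}}\bigr)V_{L}
\]
rather than~(2.6) verbatim. The discrepancy---an overall $\sqrt{t}$ shift together with the $L^{+}\!/L^{-}$ swap you already anticipated---reflects a convention mismatch between the substitution recorded in \S2.1 and the form in which the paper copies the skein relation, not a flaw in your argument. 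You were right to flag exactly this bookkeeping as the main obstacle.
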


\subsection{Cluster algebras of rank 2}
For a pair of positive integers $b$ and $c$,  we define the matrix:
\begin{equation}
B=\left(
\begin{matrix} 0 & b\cr -c & 0
\end{matrix}\right).
\end{equation}

\medskip
The reader can verify, that the exchange relations (\ref{eq1.1}) 
for $B$  take the form:
\begin{equation}\label{eq2.7}
x_{i-1} x_{i+1} =
\left\{
\begin{array}[h]{ll}
1+x_i^b &  \mbox{if} \quad \mbox{$i$ odd,} \\
1+x_i^c  & \mbox{if}  \quad \mbox{$i$ even.}
\end{array}
\right.
\end{equation}

\medskip
Let us consider  the field of rational functions in two
commuting independent variables $x_1$ and $x_2$ with the rational
coefficients. 
We shall write $\mathcal{A}(b,c)$ for  a cluster algebra of  rank 2 
 generated by the variables $x_i$
[Sherman \& Zelevinsky 2004]   \cite[Section 2]{SheZe1}. 
Denote by $\mathscr{B}$ a basis of the algebra $\mathcal{A}(b,c)$.
\begin{theorem}\label{thm2.1}
{\bf  (\cite[Theorem 2.8]{SheZe1})}
Suppose that  $b=c=2$ or $b=1$ and  $c=4$. Then  
$\mathscr{B}=\{x_i^px_{i+1}^q ~|~ p,q\ge 0\} ~\bigcup ~\{T_n(x_1x_4-x_2x_3) ~|~ n\ge 1\}$,
where $T_n(x)$ are the Chebyshev polynomials of the first kind.  
\end{theorem}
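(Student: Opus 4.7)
The plan is to establish Theorem \ref{thm2.1} by constructing $\mathscr{B}$ explicitly and then verifying spanning and linear independence separately. First, using the exchange relations (\ref{eq2.7}), I would compute the element $z := x_1 x_4 - x_2 x_3$ in terms of $x_1, x_2$. For $b = c = 2$ a short calculation yields
$$
z = \frac{x_1^2 + x_2^2 + 1}{x_1 x_2},
$$
and an analogous closed form appears for $b = 1, c = 4$, confirming $z \in \mathcal{A}(b,c)$. The essential structural identity is the shift invariance $x_i x_{i+3} - x_{i+1} x_{i+2} = z$ for all $i \in \mathbb{Z}$ (with a modified expression in the asymmetric case $b = 1, c = 4$). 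Setting $z_i := x_i x_{i+3} - x_{i+1} x_{i+2}$, I would use the exchange relation $x_i x_{i+2} = 1 + x_{i+1}^{b \text{ or } c}$ to eliminate $x_i$ and rewrite $z_i = (x_{i+1} + x_{i+3})/x_{i+2}$, then verify that this expression is independent of $i$ by a second application of the exchange relation. The Chebyshev recursion $T_{n+1}(z) = z T_n(z) - T_{n-1}(z)$ (with $T_0 = 2, T_1 = z$) then produces the family $\{T_n(z)\}_{n \geq 1}$ as intrinsic elements of $\mathcal{A}(b,c)$ complementary to the cluster monomials.

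For spanning, the shift identity rewrites any non-adjacent product $x_i x_{i+3}$ as $x_{i+1} x_{i+2} + z$; iterating shows that every monomial $x_{i_1}^{p_1} \cdots x_{i_r}^{p_r}$ in cluster variables decomposes as a $\mathbb{Z}$-linear combination of adjacent cluster monomials $x_j^p x_{j+1}^q$ and elements $T_n(z)$. Since the cluster variables generate $\mathcal{A}(b,c)$ over $\mathbb{Z}$, the set $\mathscr{B}$ spans. For linear independence, I would grade by the denominator vectors of the Laurent expansions in $(x_1, x_2)$: distinct cluster monomials $x_i^p x_{i+1}^q$ occupy distinct homogeneous components (a standard fact for rank 2 cluster algebras of finite or affine type), while the $T_n(z)$ lie in a single distinguished component where $\mathbb{Z}$-independence of $\{1, z, z^2, \ldots\}$ is immediate from $z$ being a non-constant Laurent polynomial.

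The main obstacle is the spanning step for products $x_{i_1}^{p_1} \cdots x_{i_r}^{p_r}$ with multiple non-adjacent factors and large exponents. The base case $r = 2$ falls out of shift invariance, but controlling the Chebyshev degree arising from repeated substitutions---especially when the identity is raised to a power and expanded binomially---requires careful bookkeeping. The cases $b = c = 2$ and $b = 1, c = 4$ demand separate arguments because the asymmetric exchange relations in the latter case mix cluster variables of different weights. A cleaner route may be to bypass the induction by invoking the description of $\mathcal{A}(b,c)$ as the upper cluster algebra (the intersection of Laurent rings indexed by the mutation orbit) and verifying the basis property one Laurent ring at a time, which is essentially the strategy of [Sherman \& Zelevinsky 2004] \cite{SheZe1}.
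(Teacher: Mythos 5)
The first thing to say is that the paper contains no proof of this statement: Theorem \ref{thm2.1} is imported verbatim from Sherman--Zelevinsky \cite[Theorem 2.8]{SheZe1}, so the only benchmark for your argument is the original proof there. Your outline does track their strategy in broad strokes: the shift-invariant element $z=x_ix_{i+3}-x_{i+1}x_{i+2}$, spanning via rewriting rules, and independence via denominator vectors. Your computation $z=(x_1^2+x_2^2+1)/(x_1x_2)$ and the verification of shift invariance through the identity $z=(x_{i+1}+x_{i+3})/x_{i+2}$ are correct for $b=c=2$.

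That said, there are genuine gaps. First, the spanning step does not reduce to the single shift identity: rewriting $x_ix_{i+3}$ as $x_{i+1}x_{i+2}+z$ handles distance~$3$ only, and products $x_ix_{i+k}$ with $k\ge 4$ require the full system of linearization identities $z_nx_i=x_{i-n}+x_{i+n}$ together with the product rule $z_mz_n=z_{m+n}+z_{|m-n|}$, where $z_0=2$, $z_1=z$, $z_{n+1}=zz_n-z_{n-1}$; without these you cannot control which Chebyshev elements appear, and you flag this yourself as unresolved rather than resolving it. Second, the independence argument is wrong as stated in one place: the elements $T_n(z)$ do \emph{not} lie in a single homogeneous component, since the denominator vector of $z_n$ is $(n,n)$ and grows with $n$. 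What actually saves the argument is that these vectors lie on the diagonal ray, which is disjoint from every cone spanned by the denominator vectors of two adjacent cluster variables; that separation is the Sherman--Zelevinsky mechanism and needs to be stated correctly. Third, the case $b=1$, $c=4$ is not merely ``analogous'': the exchange relations there alternate between exponents $1$ and $4$, so the invariant element and its denominator analysis must be redone, which you acknowledge but do not carry out. Finally, a normalization caution: the recursion $T_0=2$, $T_1=z$, $T_{n+1}=zT_n-T_{n-1}$ produces $2T_n(z/2)$ in terms of the genuine Chebyshev polynomials of the first kind; this hidden factor of $2$ is exactly the one surfacing in the paper's equations (\ref{eq3.5}) and (\ref{eq3.9}), so you should fix one convention and keep it consistent with the basis you claim.
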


\section{Proof of theorem \ref{thm1.1}}
Let us outline the main idea. 
Recall that each cluster variable $x_i$ in $\mathcal{A}(\mathbf{x}, S_{0,2})$ 
can be obtained from the initial cluster $(x_1,x_2)$ by a sequence 
of mutations described by the exchange relations (\ref{eq1.1}).
Likewise,   each Jones polynomial $V_L(t)$ can be obtained from the
$V_K(t)$ of the unknot $K$  by a local surgery of the knot diagram described by  
the skein relation (\ref{eq2.6}).
Roughly speaking, it will be shown that (\ref{eq1.1}) and    (\ref{eq2.6})  are equivalent relations
modulo  the  substitution   $t^2= {2(x_1^2+x_2^2+1)\over x_1x_2}$. 
We spilt the proof in a series of lemmas.

\begin{lemma}\label{lm3.1}
The substitution
\begin{equation}\label{eq3.1} 
\begin{cases} W_{L^+}=\left(-{t+1\over\sqrt{t}}\right) ~V_{L^+}&\cr
W_{L^-}=\left(-{t+1\over\sqrt{t}}\right) ~V_{L^-} &\cr
W_L=(t^2-1)V_L &
\end{cases}
\end{equation}
brings the skein relation  (\ref{eq2.6}) to the form: 
\begin{equation}\label{eq3.2} 
W_L=t^2  ~W_{L^+}-W_{L^-}. 
\end{equation}
\end{lemma}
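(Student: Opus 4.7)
The plan is to treat this as a direct algebraic verification: substitute the definitions of $W_{L^+}$, $W_{L^-}$, and $W_L$ from (\ref{eq3.1}) into the target identity (\ref{eq3.2}) and show that what results is precisely the skein relation (\ref{eq2.6}) after clearing common factors.

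Concretely, I would first plug the three substitutions into the right-hand side of (\ref{eq3.2}), obtaining
\begin{equation*}
t^2 W_{L^+} - W_{L^-} = -\frac{t^2(t+1)}{\sqrt{t}}\,V_{L^+} + \frac{t+1}{\sqrt{t}}\,V_{L^-},
\end{equation*}
and equate this with $W_L = (t^2-1)V_L = (t-1)(t+1)V_L$. The common factor $(t+1)$ cancels on both sides, which is the key simplification and the reason the prefactor $-\tfrac{t+1}{\sqrt{t}}$ is chosen for $W_{L^\pm}$ while $W_L$ carries $t^2-1$. After dividing by $(t+1)$ I would be left with
\begin{equation*}
(t-1)V_L = -t^{3/2} V_{L^+} + t^{-1/2}V_{L^-}.
\end{equation*}
Multiplying the original skein relation (\ref{eq2.6}) through by $\sqrt{t}$ gives exactly this identity, so the two relations are equivalent. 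Running the computation in reverse establishes (\ref{eq3.2}) from (\ref{eq2.6}).

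There is essentially no obstacle here; the only care needed is bookkeeping of signs and of half-integer powers of $t$, together with the observation that $t^2-1 = (t-1)(t+1)$ makes the prefactor $(t+1)$ cancel uniformly. I would present the verification in one short displayed calculation rather than unpacking each step, since the lemma is purely a change-of-variables identity intended to put the skein relation into a form that can later be matched against the exchange relations (\ref{eq2.7}) under the substitution $t^2 = 2(x_1^2+x_2^2+1)/(x_1x_2)$.
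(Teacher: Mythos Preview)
Your proposal is correct and is essentially the same direct algebraic verification as in the paper; the only cosmetic difference is direction. The paper multiplies (\ref{eq2.6}) by $\frac{(t^2-1)\sqrt{t}}{t-1}=(t+1)\sqrt{t}$ to obtain $(t^2-1)V_L=\frac{t+1}{\sqrt{t}}\bigl[V_{L^-}-t^2V_{L^+}\bigr]$ and then substitutes, whereas you substitute first and then cancel $(t+1)$ and compare with $\sqrt{t}$ times (\ref{eq2.6}); these are the same computation read forwards and backwards.
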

\begin{proof}
Indeed,  let us multiply both sides of the equation (\ref{eq2.6}) by  
${(t^2-1)\sqrt{t}\over t-1}$.  After obvious algebraic operations, the 
skein relation (\ref{eq2.6})  can be written in the form: 
\begin{equation}\label{eq3.3} 
(t^2-1)V_L=    {t+1\over \sqrt{t}}\left[ 
V_{L^-} -t^2V_{L^+}\right]. 
\end{equation}

\bigskip
Using the substitution (\ref{eq3.1}), one can rewrite
(\ref{eq3.3}) in the form:
\begin{equation}\label{eq3.4} 
W_L=t^2  ~W_{L^+}-W_{L^-}. 
\end{equation}

\bigskip
Thus the skein relation (\ref{eq2.6}) is equivalent to the relation 
(\ref{eq3.4}).  Lemma \ref{lm3.1}  is proved. 
\end{proof}

\begin{lemma}\label{lm3.2}
The exchange relations (\ref{eq1.1}) corresponding to the matrix $B$ 
given by formula (\ref{eq1.3}) imply an exchange relation:
\begin{equation}\label{eq3.5} 
T_{n+1}(x)=2(x_1x_4-x_2x_3)T_n(x)-T_{n-1}(x), 
\end{equation}
where $T_n(x)$ are the Chebyshev polynomials of the first kind.  
 \end{lemma}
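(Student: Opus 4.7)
The strategy is very short: reduce to the rank-2 setting via (\ref{eq2.7}), then invoke the Sherman--Zelevinsky basis theorem, then read off the Chebyshev three-term recurrence.

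First I would check by direct substitution that the general exchange relations (\ref{eq1.1}) applied to the matrix $B$ of (\ref{eq1.3}) collapse to $x_{i-1}x_{i+1} = 1 + x_i^2$, which is exactly the rank-2 case of formula (\ref{eq2.7}) with $b = c = 2$. Thus the cluster algebra attached to $S_{0,2}$ coincides with $\mathcal{A}(2,2)$ in the notation of Section 2, and the hypothesis of Theorem \ref{thm2.1} is satisfied.

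Next, Theorem \ref{thm2.1} applied with $b = c = 2$ gives that the set $\{T_n(x_1x_4 - x_2x_3) : n \geq 1\}$ lies in the canonical basis $\mathscr{B}$, where $T_n$ denotes the Chebyshev polynomial of the first kind. Setting $y := x_1x_4 - x_2x_3$, the classical defining three-term recurrence
$$T_{n+1}(y) = 2y\,T_n(y) - T_{n-1}(y), \qquad T_0(y) = 1, \quad T_1(y) = y,$$
is precisely the claimed identity (\ref{eq3.5}) once the variable $x$ in $T_n(x)$ is read as the argument $y = x_1x_4 - x_2x_3$.

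The only nontrivial external input is the Sherman--Zelevinsky basis result cited as Theorem \ref{thm2.1}. The main obstacle, such as it is, lies in being confident that the specific combination $y = x_1x_4 - x_2x_3$ really is the correct \emph{Chebyshev argument} rather than some other Laurent combination of the cluster variables. This can be sanity-checked by verifying that $y$ is shift-invariant under mutation, i.e.\ $x_ix_{i+3} - x_{i+1}x_{i+2}$ is independent of $i$, which follows from the exchange relations $x_{i-1}x_{i+1} = 1 + x_i^2$ by a short algebraic manipulation using $x_1x_3 = 1 + x_2^2$ and $x_2x_4 = 1 + x_3^2$. Given this invariance, the identification of $y$ with the Chebyshev argument in the basis is forced, and the recursion (\ref{eq3.5}) follows immediately from the definition of $T_n$.
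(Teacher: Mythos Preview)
Your proposal is correct and follows essentially the same route as the paper: invoke the Sherman--Zelevinsky basis (Theorem~\ref{thm2.1}) to place the elements $T_n(x_1x_4-x_2x_3)$ in $\mathcal{A}(\mathbf{x},S_{0,2})$, then read off the standard Chebyshev three-term recurrence. The paper adds one rhetorical step you omit---it declares $(T_{n-1}(x),T_n(x))$ a seed and argues by ``uniqueness of the rational function'' that (\ref{eq3.5}) deserves the name \emph{exchange relation}---while you instead add a shift-invariance check on $y=x_1x_4-x_2x_3$; neither addition changes the mathematical content.
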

\begin{proof}
In view of Theorem \ref{thm2.1},  the basis $\mathscr{B}$ of the 
cluster algebra $\mathcal{A}(\mathbf{x}, S_{0,2})$ has the form 
\begin{equation}\label{eq3.6} 
\mathscr{B}=\{x_i^px_{i+1}^q ~|~ p,q\ge 0\} ~\bigcup ~\{T_n(x_1x_4-x_2x_3) ~|~ n\ge 1\}. 
\end{equation}

\medskip
On the other hand, it is well known that the Chebyshev polynomials 
of the first kind satisfy the recurrence relation:
\begin{equation}\label{eq3.7} 
\begin{cases} 
T_0(x)=1&\cr
T_1(x)=x&\cr
T_{n+1}(x)=2x~T_n(x)-T_{n-1}(x).&
\end{cases}
\end{equation}

\medskip
Since the  $\mathcal{A}(\mathbf{x}, S_{0,2})$ is a cluster algebra of rank 2,
one can express via the exchange relation any cluster variable $x_i$ as a rational function of the
initial seed $\mathbf{x}=(x_1,x_2)$  [Williams 2014]  \cite{Wil1}.
We let $\mathbf{x}=(T_{n-1}(x), T_n(x))$, where $x=  x_1x_4-x_2x_3$. 
It follows from (\ref{eq3.7}) that the cluster variable  $T_{n+1}(x)$ is a rational 
function of the $T_{n-1}(x)$ and $T_n(x)$ of the form:
\begin{equation}\label{eq3.8} 
T_{n+1}(x)=2x~T_n(x)-T_{n-1}(x).
\end{equation}
Since such a rational function is unique, we conclude that (\ref{eq3.8}) 
is an exchange relation in the cluster algebra  $\mathcal{A}(\mathbf{x}, S_{0,2})$. 
Lemma \ref{lm3.2} is proved. 
\end{proof}
\begin{remark}\label{rmk3.3}
Apart from (\ref{eq3.8}) there are other exchange relations in  the cluster algebra $\mathcal{A}(\mathbf{x}, S_{0,2})$
coming from the  elements $x_i^px_{i+1}^q$ of the basis $\mathscr{B}$. 
It is not hard to see, that it is the reason why Pascal's triangle is Figure 2 is truncated 
[Jones 1991] \cite[p. 36, 50]{J1}
and one gets an inclusion  $\mathscr{J}\subset\mathcal{A}(\mathbf{x}, S_{0,2})$
rather than an isomorphism. 
\end{remark}
\begin{corollary}\label{cor3.4}
The skein relation (\ref{eq3.2}) is equivalent to the exchange relation 
(\ref{eq3.5}) modulo the equation
\begin{equation}\label{eq3.9} 
t^2=2 (x_1x_4-x_2x_3).
\end{equation}
\end{corollary}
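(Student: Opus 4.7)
The plan is to show that after the substitution $t^2 = 2(x_1x_4 - x_2x_3)$, the two three-term linear recurrences in Lemmas \ref{lm3.1} and \ref{lm3.2} coincide term by term, so that each solution of one is, formally, a solution of the other.

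First I would rewrite the exchange relation (\ref{eq3.5}) after inserting (\ref{eq3.9}): the right-hand side coefficient $2(x_1x_4 - x_2x_3)$ is replaced by $t^2$, so (\ref{eq3.5}) becomes
\begin{equation}\label{eq3.10}
T_{n+1}(x) = t^2\, T_n(x) - T_{n-1}(x).
\end{equation}
Placing (\ref{eq3.10}) alongside the skein relation in the form (\ref{eq3.2}),
\begin{equation*}
W_L = t^2\, W_{L^+} - W_{L^-},
\end{equation*}
one sees that both identities are the same homogeneous three-term linear recurrence with identical leading coefficient $t^2$. I would then set up the correspondence $W_{L^-}\leftrightarrow T_{n-1}(x)$, $W_{L^+}\leftrightarrow T_n(x)$, $W_L\leftrightarrow T_{n+1}(x)$ so that every triple satisfying (\ref{eq3.2}) satisfies (\ref{eq3.10}) under (\ref{eq3.9}), and conversely, each triple of successive Chebyshev values satisfies the skein relation in $W$-form. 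In particular, the scaled polynomials $W_{L^\pm},W_L$ introduced in Lemma \ref{lm3.1} and the Chebyshev cluster basis elements of Lemma \ref{lm3.2} obey the same two-step recursion, so they are related by a bijection once an initial pair is fixed.

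The main point to be careful about is that an equivalence of recurrences is not by itself an equivalence of solutions unless initial data match; here one has to remark that the cluster algebra $\mathcal{A}(\mathbf{x}, S_{0,2})$ is a free commutative ring over $\mathbf{Z}$ in $x_1,\dots,x_4$ (so the identity (\ref{eq3.9}) may be imposed without inconsistency), while in the Jones direction the recurrence together with the normalization $V_K(t)=1$ of the unknot determines the polynomials of all links uniquely. So the substitution (\ref{eq3.9}) transforms one recursive system into the other, and the corollary follows directly from combining Lemmas \ref{lm3.1} and \ref{lm3.2}. The hard part is really only the bookkeeping of identifying $(L^-,L,L^+)$ with three consecutive Chebyshev polynomials; once this bijection is stated explicitly, the equivalence of (\ref{eq3.2}) and (\ref{eq3.5}) modulo (\ref{eq3.9}) is immediate.
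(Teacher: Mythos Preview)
Your proposal is correct and follows essentially the same approach as the paper: both substitute $t^2 = 2(x_1x_4 - x_2x_3)$ into (\ref{eq3.5}) and then identify the triple $(W_{L^-}, W_{L^+}, W_L)$ with $(T_{n-1}, T_n, T_{n+1})$, making the two three-term recurrences coincide. Your extra discussion of initial data is more than the corollary requires, and one small inaccuracy: $\mathcal{A}(\mathbf{x}, S_{0,2})$ is not a free commutative ring in $x_1,\dots,x_4$, since $x_3$ and $x_4$ are already determined by $x_1,x_2$ through the exchange relations (\ref{eq3.11}).
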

\begin{proof}
The proof is comparison of equations (\ref{eq3.2}) and (\ref{eq3.5}),
where   
\begin{equation}\label{eq3.12+} 
\begin{cases} 
W_L(t)=T_{n+1}(x)&\cr
W_{L^+}(t)= T_n(x)&\cr
W_{L^-}(t)=T_{n-1}(x).&
\end{cases}
\end{equation}
\end{proof}

\begin{lemma}\label{lm3.5}
The equation (\ref{eq3.9}) is equivalent to the equation
\begin{equation}\label{eq3.10} 
t^2=2 \left({x_1^2+x_2^2+1\over x_1x_2}\right).
\end{equation}
\end{lemma}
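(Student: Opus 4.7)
The plan is to verify directly that the quantity $x_1 x_4 - x_2 x_3$ equals $\dfrac{x_1^2 + x_2^2 + 1}{x_1 x_2}$ by expressing $x_3$ and $x_4$ in terms of the initial cluster $(x_1, x_2)$ via the exchange relations of Lemma \ref{lm3.2}.

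First I would specialize the rank-2 exchange relation (\ref{eq2.7}) to the case $b = c = 2$ that corresponds to the matrix $B$ in (\ref{eq1.3}), obtaining $x_{i-1} x_{i+1} = 1 + x_i^2$ for every $i$. Setting $i = 2$ and $i = 3$ in turn yields
\begin{equation*}
x_3 = \frac{1 + x_2^2}{x_1}, \qquad x_4 = \frac{1 + x_3^2}{x_2} = \frac{x_1^2 + (1 + x_2^2)^2}{x_1^2 x_2}.
\end{equation*}

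Next I would substitute these expressions into $x_1 x_4 - x_2 x_3$, bring everything over the common denominator $x_1 x_2$, and simplify. The numerator becomes $x_1^2 + (1 + x_2^2)^2 - x_2^2(1 + x_2^2) = x_1^2 + (1 + x_2^2)\bigl[(1 + x_2^2) - x_2^2\bigr] = x_1^2 + x_2^2 + 1$, so that
\begin{equation*}
x_1 x_4 - x_2 x_3 = \frac{x_1^2 + x_2^2 + 1}{x_1 x_2}.
\end{equation*}
Multiplying by $2$ shows that the right-hand sides of (\ref{eq3.9}) and (\ref{eq3.10}) agree as elements of the field of rational functions in $x_1, x_2$, and the equivalence of the two equations follows at once.

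There is no serious obstacle here; the argument is a short algebraic identity once one writes $x_3$ and $x_4$ as Laurent polynomials in the initial cluster. The only thing one must be careful about is to apply the exchange relation (\ref{eq2.7}) consistently (alternating the role of odd and even indices), which in the $b = c = 2$ case happens to collapse to the single formula above.
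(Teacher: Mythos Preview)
Your proof is correct and follows essentially the same route as the paper: specialize the rank-2 exchange relation (\ref{eq2.7}) to $b=c=2$, solve for $x_3$ and $x_4$ in terms of $x_1,x_2$, and compute $x_1x_4-x_2x_3$ directly. The only slip is that you attribute the relevant exchange relations to Lemma~\ref{lm3.2} rather than to (\ref{eq2.7}); otherwise the argument matches the paper's, with a bit more detail supplied in the simplification of the numerator.
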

\begin{proof}
Since the  $\mathcal{A}(\mathbf{x}, S_{0,2})$ is a cluster algebra of rank 2,
the cluster variables $x_3$ and $x_4$ in (\ref{eq3.9})  must be rational functions of 
the cluster variables $x_1$ and $x_2$. To find an explicit formula,  
we shall use  the exchange relations (\ref{eq2.7}). In our case $b=c=2$ and the exchange
relations (\ref{eq2.7})  take the form:
\begin{equation}\label{eq3.11} 
x_{i-1}x_{i+1}=x_i^2+1. 
\end{equation}

From (\ref{eq3.11}) one gets the following equations:
\begin{equation}\label{eq3.12} 
\begin{cases} 
x_3={x_2^2+1\over x_1}&\cr
x_4={x_3^2+1\over x_2}={x_1^2+(x_2^2+1)^2\over x_1^2x_2}.&
\end{cases}
\end{equation}

Using equations (\ref{eq3.12}) we conclude that:
\begin{equation}\label{eq3.13} 
x_1x_4-x_2x_3 = {x_1^2+x_2^2+1\over x_1x_2}. 
\end{equation}

Lemma \ref{lm3.5} follows from (\ref{eq3.9}) and (\ref{eq3.13}).
\end{proof}

\begin{lemma}\label{lm3.6}
 $\mathscr{J}\subset \mathcal{A}(\mathbf{x}, S_{0,2}).$
\end{lemma}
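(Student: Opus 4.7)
The plan is to prove the inclusion $\mathscr{J}\subset\mathcal{A}(\mathbf{x},S_{0,2})$ by induction on the number of skein operations needed to reduce a link $L$ to the unknot $K$. The conceptual engine, assembled in Lemmas \ref{lm3.1}, \ref{lm3.2}, \ref{lm3.5} and Corollary \ref{cor3.4}, is that the Jones skein recursion and the Chebyshev exchange recursion become formally identical under the substitution $t^2=2(x_1^2+x_2^2+1)/(x_1x_2)$, so an induction driven by the skein tree mirrors an induction producing Chebyshev basis elements of the cluster algebra.

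For the base case, the unknot has $V_K(t)=1$, which lies in $\mathcal{A}(\mathbf{x},S_{0,2})$ tautologically. For the inductive step, any link $L$ sits in a skein triple $(L,L^+,L^-)$ with $L^\pm$ of strictly smaller complexity; assuming $V_{L^\pm}$ already belong to the cluster algebra, I would invoke Lemma \ref{lm3.1} to rewrite the skein relation (\ref{eq2.6}) as $W_L=t^2W_{L^+}-W_{L^-}$. Corollary \ref{cor3.4}, together with the identifications (\ref{eq3.12+}), casts this as the Chebyshev recurrence $T_{n+1}(x)=2xT_n(x)-T_{n-1}(x)$ once we substitute $t^2=2x$ with $x=x_1x_4-x_2x_3$, and by Lemma \ref{lm3.5} this is equivalent to the substitution (\ref{eq3.10}). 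By Theorem \ref{thm2.1}, every $T_n(x_1x_4-x_2x_3)$ lies in the basis $\mathscr{B}$ of $\mathcal{A}(\mathbf{x},S_{0,2})$, so every $W_L$ lies in the cluster algebra; inverting the substitution (\ref{eq3.1}) then returns $V_L$ to $\mathcal{A}(\mathbf{x},S_{0,2})$, closing the induction.

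The main obstacle is this final inversion step. The auxiliary variables $W$ are related to $V$ by \emph{different} rescaling factors on $L$ (namely $t^2-1$) and on $L^\pm$ (namely $-(t+1)/\sqrt{t}$), and these factors involve $\sqrt{t}$ and $t^2-1$ which must be shown to interact with the cluster variables so that $V_L$ actually lands in $\mathcal{A}(\mathbf{x},S_{0,2})$ rather than only in its field of fractions or in a quadratic extension. I would dispatch this compatibility by hand for the first few triples (the unknot, two-component unlink, Hopf link, trefoil — exactly the examples flagged for Section 4) and then propagate it recursively, leaning on the observation recorded in Remark \ref{rmk3.3} that only the Chebyshev half of the basis $\mathscr{B}$ corresponds to Jones polynomials, while the monomials $x_i^px_{i+1}^q$ are responsible for the strictness of the inclusion and for the truncation of Pascal's triangle in Figure 2.
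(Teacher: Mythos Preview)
Your approach is essentially the paper's: it too invokes Lemmas \ref{lm3.1}, \ref{lm3.5} and Corollary \ref{cor3.4} to identify the skein recursion with the Chebyshev recurrence under the substitution (\ref{eq3.10}), and then appeals to Theorem \ref{thm2.1} to place the resulting polynomials in $\mathcal{A}(\mathbf{x},S_{0,2})$. The paper's argument is in fact terser than yours and does not address the inversion obstacle you flag; it simply asserts that $\mathscr{J}$ is generated by the Chebyshev polynomials $T_n\!\left[2\left(\tfrac{x_1^2+x_2^2+1}{x_1x_2}\right)\right]$ and points to Remark \ref{rmk3.3} for the strictness of the inclusion.
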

\begin{proof}
(i) Recall that  the ring $\mathscr{J}\subset  \mathbf{Z} [t^{\pm {1\over 2}}]$ is  generated by all
Jones polynomials $V_L(t)$.  The map $u\mapsto t^2$ defines an embedding  $\mathscr{J}\subset  \mathbf{Z} [t^{\pm 1}]$. 

\bigskip
(ii) On the other hand, each $V_L(u)$ can be obtained from the Jones polynomial $V_K(u)$ of the trivial link
using the skein relations (\ref{eq2.6}).  By  lemma \ref{lm3.1}, corollary \ref{cor3.4} and lemma \ref{lm3.5},
one gets an inclusion:
\begin{equation}\label{eq3.14}
\mathscr{J}\subset \mathcal{A}(\mathbf{x}, S_{0,2}),
\end{equation}
 where $\mathscr{J}$ is generated 
by the Chebyshev polynomials $T_n \left[2\left({x_1^2+x_2^2+1\over x_1x_2}\right)\right]$.  
We refer the reader to remark \ref{rmk3.3} for the extra details. 
Lemma \ref{lm3.6} is proved. 
\end{proof}

\begin{lemma}\label{lm3.7}
  $\mathscr{J}\cong K_0(\mathbb{P})$. 
\end{lemma}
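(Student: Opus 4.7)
The plan is to identify both sides of the claimed isomorphism with the same distinguished dimension subgroup of $K_0(\mathbb{A}(\mathbf{x}, S_{0,2})) \cong \mathcal{A}(\mathbf{x}, S_{0,2})$, namely the one generated (as an ordered abelian semigroup) by the Chebyshev family $\{T_n(x_1x_4 - x_2x_3) \mid n \ge 1\}$.

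First I would combine Corollary \ref{cor3.4} with Lemma \ref{lm3.5} to sharpen Lemma \ref{lm3.6}: under the substitution $t^2 = 2(x_1^2+x_2^2+1)/(x_1x_2) = 2(x_1x_4 - x_2x_3)$, the generators of $\mathscr{J}$ (the Jones polynomials $V_L(t)$) are exactly the Chebyshev polynomials $T_n(x_1x_4-x_2x_3)$, since the skein recursion (\ref{eq3.2}) matches, term for term, the Chebyshev exchange recursion (\ref{eq3.5}). This identifies $\mathscr{J}$ with the sub-dimension-group of $\mathcal{A}(\mathbf{x}, S_{0,2})$ spanned by the Chebyshev half of the Sherman--Zelevinsky basis $\mathscr{B}$ of Theorem \ref{thm2.1}; the monomial half $\{x_i^p x_{i+1}^q\}$ of $\mathscr{B}$ does not lie in the image of $\mathscr{J}$, which is the structural reason behind the inclusion in Lemma \ref{lm3.6}.

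Next I would compute $K_0(\mathbb{P})$ directly from the truncated Bratteli diagram of Figure 2. The inclusion $\mathbb{P} \subset \mathbb{A}(\mathbf{x}, S_{0,2})$ of AF-algebras induces an order-preserving embedding $K_0(\mathbb{P}) \hookrightarrow K_0(\mathbb{A}(\mathbf{x}, S_{0,2})) = \mathcal{A}(\mathbf{x}, S_{0,2})$. Following Remark \ref{rmk3.3} and the account in [Jones 1991, pp.~36, 50], deleting the leftmost strand of Pascal's triangle removes exactly the vertices whose path-counts from the apex realize the monomial basis elements $x_i^p x_{i+1}^q$, while the surviving vertices encode the Chebyshev elements $T_n(x_1x_4 - x_2x_3)$; more concretely, the path multiplicities along the truncated diagram satisfy precisely the Chebyshev recurrence (\ref{eq3.7}). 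Passing to the inductive limit of the corresponding finite-dimensional semisimple quotients, $K_0(\mathbb{P})$ thus coincides as a dimension group with the subgroup of $\mathcal{A}(\mathbf{x}, S_{0,2})$ positively generated by $\{T_n(x_1x_4-x_2x_3)\mid n\ge 1\}$.

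Putting the two identifications together, both $\mathscr{J}$ (via the substitution (\ref{eq3.10})) and $K_0(\mathbb{P})$ are the same dimension subgroup of $\mathcal{A}(\mathbf{x}, S_{0,2})$, yielding the desired isomorphism of dimension groups. The main obstacle I anticipate lies in Step 2: making rigorous the claim that the truncation shown in Figure 2 corresponds exactly to the Chebyshev part of $\mathscr{B}$. The Bratteli diagram determines $K_0$ only through the full sequence of multiplicity matrices, not merely through its underlying graph, so one must trace the path counts on the truncated diagram and verify that the induced transition maps reproduce the Chebyshev recurrence (\ref{eq3.7}) together with the correct positive cone; this is essentially a combinatorial verification underwritten by [Jones 1991] and Remark \ref{rmk3.3}, but it requires some care, since mild mis-specifications of the truncation yield genuinely distinct dimension groups.
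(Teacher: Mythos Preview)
Your approach is essentially the same as the paper's: both embed $\mathscr{J}$ and $K_0(\mathbb{P})$ into $\mathcal{A}(\mathbf{x},S_{0,2})\cong K_0(\mathbb{A}(\mathbf{x},S_{0,2}))$ via Lemma~\ref{lm3.6} and the AF-inclusion $\mathbb{P}\subset\mathbb{A}(\mathbf{x},S_{0,2})$, and then argue that the two images coincide. The paper packages this as the commutative square in Figure~3 and simply reads off the isomorphism, whereas you make the common image explicit as the Chebyshev half of the Sherman--Zelevinsky basis $\mathscr{B}$; your version is thus a fleshed-out form of the paper's argument rather than a different route, and your anticipated obstacle in Step~2 (matching the truncation combinatorics to the Chebyshev recurrence) is exactly the point the paper leaves to Remark~\ref{rmk3.3} and the reference to \cite[pp.~36,~50]{J1}.
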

\begin{proof}
(i)  Recall that the  cluster algebra $\mathcal{A}(\mathbf{x}, S_{0,2})$ has the structure of a 
dimension group \cite[Section 4.4]{Nik1}. Lemma \ref{lm3.6} says that $\mathscr{J}\subset \mathcal{A}(\mathbf{x}, S_{0,2})$
is a cluster sub-algebra, hence a dimension group with the order structure inherited from  the  $\mathcal{A}(\mathbf{x}, S_{0,2})$

\bigskip
(ii) On the other hand,  comparing the Bratteli diagrams in Figures 1 and 2,  we conclude that $\mathbb{P}\subset 
 \mathbb{A}(\mathbf{x}, S_{0,2})$ is an inclusion of the AF-algebras.   Thus one gets a commutative diagram shown 
 in Figure 3.

\bigskip
(iii) In view of item (i), we obtain  an isomorphism  $K_0(\mathbb{P})\cong \mathscr{J}$ from the diagram in Figure 3. 
Lemma \ref{lm3.7} is proved.
\end{proof}

\bigskip
Theorem \ref{thm1.1} follows from lemmas \ref{lm3.5}-\ref{lm3.7}.

\begin{figure}
\begin{picture}(300,110)(-60,-5)
\put(20,70){\vector(0,-1){35}}
\put(130,70){\vector(0,-1){35}}
\put(45,23){\vector(1,0){60}}
\put(45,83){\vector(1,0){60}}
\put(15,20){$\mathscr{J}$}
\put(115,20){$ \mathcal{A}(\mathbf{x}, S_{0,2})$}
\put(17,80){$\mathbb{P}$}
\put(115,80){ $ \mathbb{A}(\mathbf{x}, S_{0,2})$}
\put(50,30){\sf inclusion}
\put(50,90){\sf  inclusion}
\put(0,50){$K_0$}
\put(140,50){$K_0$}
\end{picture}
\caption{Functor $K_0$}
\end{figure}

\section{Examples}
We shall illustrate theorem \ref{thm1.1} by calculating the $V_L(t)$ 
of  two unlinked unknots, the Hopf link and the trefoil knot. 
We refer  the reader to the remarkable paper [Lee \& Schiffler 2019] \cite{LeeSch1} 
in which the cluster algebras are used to calculate the Jones polynomials 
of the 2-bridge knots. 

\begin{example}
(Two unknots)
Denote by  $S^1\cup S^1$ two unlinked unknots. To calculate the Jones 
polynomial $V_{S^1\cup S^1}(t)$ using theorem \ref{thm1.1}, we rewrite (\ref{eq3.8}) 
in the form:
\begin{equation}\label{eq4.1} 
T_{n-1}(x)=2x~T_n(x)-T_{n+1}(x).
\end{equation}
We let $n=1$ and 
\begin{equation}\label{eq4.2} 
\begin{cases} 
T_0(x)=W_{S^1\cup S^1}(t)=(t^2-1)V_{S^1\cup S^1}(t)&\cr
T_1(x)=W_{(S^1\cup S^1)^+}(t)=\left(-{t+1\over\sqrt{t}}\right)V_{(S^1\cup S^1)^+}(t)&\cr
T_2(x)=W_{(S^1\cup S^1)^-}(t)=\left(-{t+1\over\sqrt{t}}\right)V_{(S^1\cup S^1)^-}(t),&
\end{cases}
\end{equation}
compare with (\ref{eq3.1}).  From (\ref{eq3.2}) one gets:
\begin{equation}\label{eq4.3} 
(t^2-1)V_{S^1\cup S^1}(t) =t^2\left(-{t+1\over\sqrt{t}}\right)V_{(S^1\cup S^1)^+}(t)
-\left(-{t+1\over\sqrt{t}}\right)V_{(S^1\cup S^1)^-}(t).
\end{equation}

\medskip
Since $(S^1\cup S^1)^+\cong  (S^1\cup S^1)^-\cong K$ is the unknot, we have
\begin{equation}\label{eq4.4} 
V_{(S^1\cup S^1)^+}(t)=V_{(S^1\cup S^1)^-}(t)=1.
\end{equation}

\medskip
Using (\ref{eq4.4}) we calculate from equation (\ref{eq4.3}):
\begin{equation}\label{eq4.5} 
(t^2-1)V_{S^1\cup S^1}(t) =t^2\left(-{t+1\over\sqrt{t}}\right)
-\left(-{t+1\over\sqrt{t}}\right)=(t^2-1)\left(-{t+1\over\sqrt{t}}\right).
\end{equation}

\medskip
Thus from (\ref{eq4.5}) one gets the Jones polynomial of two unlinked unknots:
\begin{equation}\label{eq4.6} 
V_{S^1\cup S^1}(t) = \left(-{t+1\over\sqrt{t}}\right)= -t^{-{1\over 2}}-t^{1\over 2}.
\end{equation}
\end{example}

\begin{example}
(Hopf link)
Denote by  $H$ the Hopf link.  We let $n=2$ 
and the substitution (\ref{eq4.2}) brings   (\ref{eq4.1}) to the form:
\begin{equation}\label{eq4.7} 
(t^2-1)V_{K}(t) =t^2\left(-{t+1\over\sqrt{t}}\right)V_{S^1\cup S^1}(t)
-\left(-{t+1\over\sqrt{t}}\right)V_{H}(t).
\end{equation}

The $V_K(t)=1$ for the unknot $K$ and $V_{S^1\cup S^1}(t)=  -t^{-{1\over 2}}-t^{1\over 2}$
for the unlinked unknots $S^1\cup S^1$, see formula (\ref{eq4.6}).
The substitution of this data and subsequent reduction of  equation (\ref{eq4.7})
gives us the following equation:
\begin{equation}\label{eq4.8} 
\left({t+1\over\sqrt{t}}\right)V_{H}(t)=-t^3-t^2-t-1=(t+1)(-t^2-1).
\end{equation}

One gets easily from (\ref{eq4.8}) the Jones polynomial of the Hopf link:
\begin{equation}\label{eq4.9} 
V_{H}(t)=-t^{5\over 2}-t^{1\over 2}. 
\end{equation}
\end{example}

\begin{example}
(Trefoil knot)
Denote by  $T$ the trefoil knot.  We let $n=3$ 
and the substitution (\ref{eq4.2}) brings   (\ref{eq4.1}) to the form:
\begin{equation}\label{eq4.10} 
(t^2-1)V_{H}(t) =t^2\left(-{t+1\over\sqrt{t}}\right)V_{K}(t)
-\left(-{t+1\over\sqrt{t}}\right)V_{T}(t).
\end{equation}

The $V_K(t)=1$ for the unknot $K$ and $V_{H}(t)=-t^{5\over 2}-t^{1\over 2}$
for the Hopf link $H$, see formula (\ref{eq4.9}). The substitution of this data and 
subsequent reduction of  equation (\ref{eq4.10})
gives us the following equation:
\begin{equation}\label{eq4.11} 
\left({t+1\over\sqrt{t}}\right)V_{T}(t)=-t^{{9\over 2}} +t^{{5\over 2}} +t^{{3\over 2}}+t^{{1\over 2}}
=\left({t+1\over\sqrt{t}}\right) \left(-t^4+t^3+t\right).
\end{equation}

One gets  from (\ref{eq4.11}) the Jones polynomial of the trefoil knot:
\begin{equation}\label{eq4.12} 
V_{T}(t)=-t^4+t^3+t. 
\end{equation}
\end{example}

\section{Remarks}
An analog of theorem \ref{thm1.1} for the HOMFLY polynomials  
[Freyd, Yetter, Hoste, Lickorish, Millet \& Ocneanu 1985] 
\cite{FYHLMO} is proved  in \cite[Section 4.4.6.2]{Nik1}. 
In this case  $g=n=1$ and  $S_{1,1}$ is a torus with a cusp. The matrix $B$ associated 
to an ideal triangulation of the Riemann surface $S_{1,1}$    has the form
[Fomin,  Shapiro  \& Thurston  2008, Example 4.6]  \cite{FoShaThu1}: 
 \begin{equation}\label{eq5.1}
 B=\left(
 \begin{matrix}0 & 2 & -2\cr
              -2 & 0 & 2\cr
               2 & -2 & 0\end{matrix}
              \right).
 \end{equation}
It follows from  the exchange relations (\ref{eq1.1})  that the 
cluster $C^*$-algebra ${\Bbb A}(\mathbf{x}, S_{1,1})$ has  the 
Bratteli diagram shown in Figure 4.

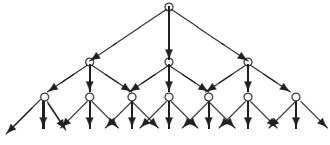
\begin{figure}
\begin{picture}(100,100)(0,130)

\put(50,200){\circle{3}}
\put(50,179){\circle{3}}
\put(20,179){\circle{3}}
\put(80,179){\circle{3}}


\put(3,166){\circle{3}}
\put(20,166){\circle{3}}
\put(36,166){\circle{3}}
\put(50,166){\circle{3}}
\put(65,166){\circle{3}}

\put(80,166){\circle{3}}
\put(98,166){\circle{3}}


\put(49,199){\vector(-3,-2){29}}
\put(51,199){\vector(3,-2){29}}
\put(50,200){\vector(0,-1){20}}


\put(19,178){\vector(-3,-2){15}}
\put(21,178){\vector(3,-2){15}}
\put(20,178){\vector(0,-1){10}}


\put(50,178){\vector(-3,-2){15}}
\put(50,178){\vector(3,-2){15}}
\put(50,178){\vector(0,-1){10}}


\put(79,178){\vector(-3,-2){15}}
\put(81,178){\vector(3,-2){15}}
\put(80,178){\vector(0,-1){10}}


\put(1.5,165){\vector(-1,-1){13}}
\put(4,164){\vector(2,-3){7}}
\put(2.5,164){\vector(0,-1){10}}

\put(19,164){\vector(-1,-1){10}}
\put(21,164){\vector(1,-1){10}}
\put(20,164){\vector(0,-1){10}}

\put(49,164){\vector(-1,-1){10}}
\put(51,164){\vector(1,-1){10}}
\put(50,164){\vector(0,-1){10}}

\put(79,164){\vector(-1,-1){10}}
\put(81,164){\vector(1,-1){10}}
\put(80,164){\vector(0,-1){10}}

\put(98,164){\vector(-1,-1){10}}
\put(100,164){\vector(1,-1){10}}
\put(98,164){\vector(0,-1){10}}


\put(65,164){\vector(-1,-1){10}}
\put(65,164){\vector(1,-1){10}}
\put(65,164){\vector(0,-1){10}}

\put(36,164){\vector(-1,-1){10}}
\put(36,164){\vector(1,-1){10}}
\put(36,164){\vector(0,-1){10}}


\end{picture}
\caption{Bratteli diagram of the algebra ${\Bbb A}(\mathbf{x}, S_{1,1})$.} 
\end{figure}

\begin{remark}
It is well known that the Jones polynomials can be obtained as a specialization
of the HOMFLY polynomials. 
This fact follows from an observation that the graph in Figure 1 is a sub-graph of
the graph in Figure 4. Hence  we get an inclusion of the cluster $C^*$-algebras: 
\begin{equation}\label{eq5.2}
\mathbb{A}(\mathbf{x}, S_{0,2})\subset\mathbb{A}(\mathbf{x}, S_{1,1}).
\end{equation}
Likewise, if one takes a double cover of the sphere by the torus 
ramified at  four points  taken for vertices of an ideal triangulation,
then the triangulation of $S_{0,2}$ is a sub-triangulation of the triangulation of $S_{1.1}$
[Fomin,  Shapiro  \& Thurston  2008]  \cite{FoShaThu1}.
Equivalently, the matrix (\ref{eq1.3}) can be obtained 
from a cancellation of the last row and column in the matrix  (\ref{eq5.1}).  
Unlike the case of the cluster algebras of rank 2 [Sherman \& Zelevinsky 2004]   \cite[Section 2]{SheZe1}, the canonical 
 bases  for the cluster algebras of rank 3 are unknown.  Therefore there is no immediate 
connection between the Chebyshev and HOMFLY polynomials. This fact can be viewed 
as a justification of the separate analysis of the Jones case. 
\end{remark}
\begin{remark}
A general result for the multivariable Laurent polynomials has been proved in \cite[Theorem 4.4.1]{Nik1}.
However, an explicit construction of such polynomials based on the exchange relations (\ref{eq1.1}) 
is unclear at the moment. 
\end{remark}

\bigskip\noindent
{\sf Acknowledgment.}  
We are grateful  to the referee for helpful comments.  

\bibliographystyle{amsplain}


\end{document}